\def\be{\begin{enumerate}}
\def\ee{\end{enumerate}}
\theoremstyle{plain}
\newtheorem{thm}{Theorem}[section]
\newtheorem{cor}[thm]{Corollary}
\newtheorem{lemma}[thm]{Lemma}
\theoremstyle{definition}
\numberwithin{equation}{section}
\title[finite homogene ous...]{Finite homogeneous effect algebras with trivial sharp elements}
\begin{document}

\author[G. Bi\'nczak]{G. Bi\'nczak$^1$}

\address{$^1$ Faculty of Mathematics and Information Sciences\\
Warsaw University of Technology\\
00-662 Warsaw, Poland}

\author[J. Kaleta]{J. Kaleta$^2$}

\address{$^2$ Department of Applied Mathematics\\
 Warsaw University of Agriculture\\02-787 Warsaw, Poland}

\email{$^1$ binczak@mini.pw.edu.pl, $^2$joanna\_kaleta@sggw.pl}

\keywords{effect algebras, sharp elements, homogeneous effect algebras}

\subjclass[2010]{81P10, 81P15}

\begin{abstract}
In this paper we describe  finite homogeneous effect algebras whose sharp elements are only $0$ and $1$.
\end{abstract}

\maketitle

\section{\bf Introduction}
Effect algebras have been introduced by Foulis and Bennet in 1994 (see \cite{FB94}) for the study of foundations of quantum mechanics (see \cite{DP00}). Independtly, Chovanec and K\^opka introduced an essentially equivalent structure called $D$-{\em poset} 
(see \cite{KC94}). Another equivalent structure was introduced by Giuntini and Greuling in \cite{GG94}).

In this paper we partially  solved (see Corollary \ref{cor3})  the following  Open Problem (see \cite{RA15}):

Problem 10.1 (G. Jen\v ca). Prove or disprove: if $E$ is an orthocomplete  homogeneous effect algebra $E$ such that $S(E)$ is a lattice, then $ E$ is a lattice effect algebra.

Let $E$ be an effect algebra, $x,y\in E$ then the set $\{z\in E\colon x\leq z\leq y\}$ we denote by $[x,y]$.

\begin{lemma}\label{lem22}
Let $E$ be a homogeneous effect algebra, $a\in E$ be an atom such that $a\leq a'$. If $v_1,v_2\in E$, $v_1\perp v_2$ and $v_1\oplus v_2\in[a,a']$ then $a\leq v_1$ or $a\leq v_2$.
\end{lemma}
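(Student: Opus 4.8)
The plan is to recognize that the interval condition $v_1\oplus v_2\in[a,a']$ is exactly the hypothesis needed to invoke homogeneity of $E$. Indeed, $v_1\oplus v_2\in[a,a']$ means precisely that $a\le v_1\oplus v_2$ and $v_1\oplus v_2\le a'$, which is the situation $u\le v_1\oplus v_2\le u'$ occurring in the definition of a homogeneous effect algebra, taken with $u=a$. So I would first apply that definition directly to produce elements $u_1,u_2\in E$ with $u_1\le v_1$, $u_2\le v_2$ and $a=u_1\oplus u_2$.

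The second step is to exploit that $a$ is an atom. From $a=u_1\oplus u_2$ we get $u_1\le a$ and $u_2\le a$, so by atomicity each of $u_1$ and $u_2$ lies in $\{0,a\}$. If $u_1=a$, then $a=a\oplus u_2$ forces $u_2=0$ by cancellativity, and $a=u_1\le v_1$; if instead $u_1=0$, then $a=0\oplus u_2=u_2\le v_2$. In either case $a\le v_1$ or $a\le v_2$, which is the assertion.

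I do not expect a genuine obstacle here. The only point requiring care is the translation of $v_1\oplus v_2\in[a,a']$ into the defining inequality of homogeneity, and the observation that the standing hypothesis $a\le a'$ is what guarantees $[a,a']$ is nonempty (and is in any case subsumed by $v_1\oplus v_2\in[a,a']$). Note also that the argument uses neither finiteness of $E$ nor any property of $S(E)$: it is a purely local consequence of homogeneity combined with the atom condition, so it should go through verbatim.
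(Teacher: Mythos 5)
Your proof is correct and follows essentially the same route as the paper: apply the definition of homogeneity with $u=a$ to the inequality $a\le v_1\oplus v_2\le a'$, obtain $u_1\le v_1$, $u_2\le v_2$ with $u_1\oplus u_2=a$, and use atomicity of $a$ to force one of $u_1,u_2$ to equal $a$. Your remark that $a\le a'$ plays no essential role beyond making $[a,a']$ meaningful is also accurate.
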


\begin{proof}
Let $a\leq v_1\oplus v_2\leq a'$. Then there exist $u_1,u_2\in E$ such that $u_1\leq v_1$, $u_2\leq v_2$ and $u_1\oplus u_2=a$ since
$E$ is homogeneous. Moreover $0\leq u_1\leq u_1\oplus u_2=a$ and $0\leq u_2\leq u_1\oplus u_2=a$, so $u_1,u_2\in\{0,a\}$ since $a$ is an atom. We know that $0\oplus0=0\not=a$ and $a\oplus a\not=a$, hence ($u_1=0$ and $u_2=a$ ) or ($u_2=0$ and $u_1=a$). Therefore $a=u_1\leq v_1$ or $a=u_2\leq v_2$.
\end{proof}

\begin{lemma}\label{lem14}
Let $E$ be an effect algebra and $x\in E$ then $x\notin S(E)\iff$ there exists $b\in E$ such than $b\not=0$ , $b\leq b'$ and $x\in[b,b']$.
\end{lemma}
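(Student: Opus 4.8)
The plan is to unwind the definition of a sharp element. Recall that $x\in S(E)$ means $x\wedge x'=0$, equivalently that the only element of $E$ lying below both $x$ and $x'$ is $0$; so $x\notin S(E)$ is precisely the assertion that there is a nonzero $c\in E$ with $c\le x$ and $c\le x'$. I will prove the two implications of the biconditional separately, using only that orthocomplementation is an order-reversing involution (so $u\le v\iff v'\le u'$ and $u''=u$).

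For the implication from left to right, suppose $x\notin S(E)$ and pick a nonzero $c\in E$ with $c\le x$ and $c\le x'$. Applying the order-reversing property to $c\le x'$ gives $x=x''\le c'$, hence $c\le x\le c'$; combining $c\le x$ with $x\le c'$ yields $c\le c'$ by transitivity. Thus $b:=c$ witnesses the right-hand side: $b\ne 0$, $b\le b'$, and $x\in[b,b']$.

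For the converse, assume there is $b\ne 0$ with $b\le b'$ and $b\le x\le b'$. From $x\le b'$ and the order-reversing property we get $b=b''\le x'$, so $b$ is a nonzero element lying below both $x$ and $x'$; hence $x$ is not sharp, i.e. $x\notin S(E)$.

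The argument is essentially immediate once the definition of sharpness is in place; the only points that require a moment's care are matching the two equivalent phrasings of ``$x$ is sharp'' (meet with the orthocomplement equal to $0$ versus having no nonzero common lower bound with the orthocomplement), and observing that the witness $c$ produced in the first direction automatically satisfies $c\le c'$, so that no separate choice of $b$ is needed. I do not anticipate any genuine obstacle here.
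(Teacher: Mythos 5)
Your proof is correct and follows essentially the same route as the paper: in both directions one translates non-sharpness into the existence of a nonzero common lower bound $b$ of $x$ and $x'$, and uses that the orthocomplement is order-reversing to pass between $b\leq x'$ and $x\leq b'$ (and to get $b\leq b'$). The only difference is that you spell out the order-reversal steps a bit more explicitly than the paper does.
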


\begin{proof}
If $x\notin S(E)$ then $0$ is not the greatest lower bound of the set $\{x,x'\}$, so there exists $b\not=0$ such that $b\leq x$ and $b\leq x'$ hence $b\leq x\leq b'$. Therefore $b\not=0$, $b\leq b'$ and $x\in[b,b']$.

If there exist  $b\in E$ such than $b\not=0$ , $b\leq b'$ and $x\in[b,b']$ then $b\leq x\leq b'$ so $b\leq x$ and $b\leq x'$ and $b\not=0$
hence $x\land x'\not=0$ and $x\notin S(E)$.

\end{proof}

\begin{lemma}\label{lem15}
Let $E$ be an effect algebra and $x\in E$. If $x$ is an atom and $x\notin S(E)$ then $x\leq x'$.
\end{lemma}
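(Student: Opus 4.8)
The plan is to invoke Lemma \ref{lem14} immediately. Since $x\notin S(E)$, that lemma hands us an element $b\in E$ with $b\neq 0$, $b\leq b'$ and $x\in[b,b']$; in particular $b\leq x\leq b'$.

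Next I would exploit that $x$ is an atom. From $0\leq b\leq x$ and minimality of the atom $x$ (the only elements below $x$ are $0$ and $x$), together with $b\neq 0$, we conclude $b=x$. Substituting into $b\leq b'$ yields $x\leq x'$, which is exactly the claim.

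There is essentially no obstacle here: the statement is a one-line consequence of Lemma \ref{lem14} and the definition of an atom. The only point that deserves a word of care is justifying $b\in\{0,x\}$, which is immediate once one recalls that an atom is a minimal nonzero element, so the interval $[0,x]$ contains nothing strictly between $0$ and $x$. I would therefore present the argument in three short sentences: extract $b$ from Lemma \ref{lem14}; note $b=x$ because $x$ is an atom and $b\neq 0$; and read off $x=b\leq b'=x'$.
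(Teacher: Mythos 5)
Your proposal is correct and follows exactly the paper's own argument: apply Lemma \ref{lem14} to obtain $b\not=0$ with $b\leq b'$ and $b\leq x\leq b'$, conclude $b=x$ by atomicity, and hence $x\leq x'$. Nothing is missing.
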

\begin{proof}
Let $x\in E$ be an atom and $x\notin S(E)$. By Lemma\ref{lem14} there exist $b\in E$ such that $b\not=0$, $b\leq b'$ and $x\in [b,b']$.
so $0\leq b\leq x$ hence $b=x$ (since $x$ is an atom and $b\not=0$) so $x\leq x'$.
\end{proof}

\begin{lemma}\label{lem20}
Let $E$ be a finite effect algebra such that $S(E)=\{0,1\}$ then
$$E=\{0,1\}\cup\bigcup\limits_{n=1}^k[a_n,a_n'],$$
where $\{a_1,\ldots a_k\}$ is the set of all atoms of $E$.
\end{lemma}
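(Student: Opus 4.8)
The plan is to prove the asserted set equality by double inclusion. The inclusion $\{0,1\}\cup\bigcup_{n=1}^k[a_n,a_n']\subseteq E$ is immediate: each interval $[a_n,a_n']$ is by definition a subset of $E$, and $0,1\in E$. So the real content is the reverse inclusion $E\subseteq\{0,1\}\cup\bigcup_{n=1}^k[a_n,a_n']$.

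For that, I would take an arbitrary $x\in E$. If $x\in\{0,1\}$ there is nothing to prove, so assume $x\notin\{0,1\}$. Since $S(E)=\{0,1\}$ by hypothesis, this means $x\notin S(E)$. Now I would invoke Lemma~\ref{lem14}: there exists $b\in E$ with $b\neq 0$, $b\leq b'$ and $x\in[b,b']$, i.e. $b\leq x\leq b'$.

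Next I would use the finiteness of $E$. Since $b\neq 0$, the set of nonzero elements below $b$ is a nonempty finite set, hence contains a minimal element $a$; then $a$ is an atom of $E$ (anything $\leq a$ is $\leq b$, so minimality in the whole algebra follows) and $a\leq b$. From $a\leq b$ the order-reversing property of the orthocomplementation gives $b'\leq a'$, and combining with $b\leq b'$ we get the chain $a\leq b\leq x\leq b'\leq a'$, so $x\in[a,a']$. Taking $a_n=a$ this shows $x\in\bigcup_{n=1}^k[a_n,a_n']$, which finishes the reverse inclusion.

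I do not anticipate a serious obstacle here; the two points needing a little care are the implication "$x\notin\{0,1\}\Rightarrow x\notin S(E)$", which is immediate from $S(E)=\{0,1\}$, and the existence of an atom below a nonzero element, which is a routine consequence of finiteness. Note also that for any atom $a_n$ with $a_n\not\leq a_n'$ the interval $[a_n,a_n']$ is empty, so including all atoms in the union (rather than only those with $a_n\leq a_n'$) does no harm.
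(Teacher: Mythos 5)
Your proof is correct and follows essentially the same route as the paper: reduce to $x\notin S(E)$, apply Lemma~\ref{lem14} to get $b\neq 0$ with $b\leq x\leq b'$, use finiteness to find an atom $a\leq b$, and conclude via the chain $a\leq b\leq x\leq b'\leq a'$. The only cosmetic difference is that the paper also invokes Lemma~\ref{lem15} to note $a_i\leq a_i'$ for all atoms, which you handle instead by observing that possibly empty intervals are harmless.
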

\begin{proof}
By Lemma \ref{lem15} $a_i\leq a_i'$ for $i=1,2,\ldots,k$. Let $x\in E$ and $x\notin\{0,1\}$ then $x\notin S(E)$. By Lemma\ref{lem14}
there exist $b\in E$ such that $b\not=0$ and $x\in[b,b']$. There exist $i\in\{1,\ldots,k\}$ such that $a_i\leq b$ since $E$ is finite.
Hence
$$a_i\leq b\leq x\leq b'\leq a_i'$$
so $x\in[a_i,a_i']$.
\end{proof}

\begin{lemma}\label{lem30}
Let $E$ be a finite homogeneous effect algebra such that $S(E)=\{0,1\}$. Let $\{a_1,\ldots,a_k\}$ be the set of all atoms of $E$.

If $na_i=\underbrace{a_i\oplus\ldots\oplus a_i}_{n-times}$ is well defined ($n\geq1$) and $na_i\in[a_j,a_j']$ then $i=j$.
\end{lemma}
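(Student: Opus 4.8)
The plan is to argue by induction on $n$. The key preliminary observation is that the hypothesis $na_i\in[a_j,a_j']$ already forces $a_j\le na_i\le a_j'$, and hence $a_j\le a_j'$ by transitivity; since $a_j$ is an atom, this is precisely the configuration to which Lemma~\ref{lem22} applies, with $a_j$ playing the role of $a$. (In fact one does not even need Lemma~\ref{lem15} for this step, only the interval hypothesis itself.)

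For the base case $n=1$ I would note that $a_i=1\cdot a_i\in[a_j,a_j']$ gives $a_j\le a_i$; since $a_i$ is an atom and $a_j\ne 0$, this yields $a_j=a_i$, i.e. $i=j$. For the inductive step, suppose $n\ge 2$ and the claim holds for $n-1$. Since $na_i$ is well defined, so is $(n-1)a_i$, and $na_i=a_i\oplus(n-1)a_i$ with $a_i\perp(n-1)a_i$. Applying Lemma~\ref{lem22} with $v_1=a_i$ and $v_2=(n-1)a_i$ (legitimate because $v_1\oplus v_2=na_i\in[a_j,a_j']$ and $a_j\le a_j'$) gives two cases: either $a_j\le a_i$, whence $a_j=a_i$ as in the base case and $i=j$; or $a_j\le(n-1)a_i$. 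In the second case one also has $(n-1)a_i\le na_i\le a_j'$, so $(n-1)a_i\in[a_j,a_j']$, and the induction hypothesis delivers $i=j$.

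I do not expect any genuine obstacle here: the argument is routine once Lemma~\ref{lem22} is available. The single idea driving the proof is to peel off one copy of $a_i$ and observe that the remaining sum $(n-1)a_i$ is still sandwiched between $a_j$ and $a_j'$, which is exactly what allows the induction to proceed.
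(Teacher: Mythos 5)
Your proof is correct and follows essentially the same route as the paper: induction on $n$, peeling off one copy of $a_i$ and applying Lemma~\ref{lem22} with $a_j$ as the atom, then using $(n-1)a_i\le na_i\le a_j'$ to push the interval condition down to the induction hypothesis. Your remark that $a_j\le a_j'$ comes for free from $a_j\le na_i\le a_j'$ is a nice explicit justification of a hypothesis of Lemma~\ref{lem22} that the paper uses tacitly.
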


\begin{proof}
(by induction with respect to $n$): If $1a_i=a_i\in[a_j,a_j']$ then $0\leq a_j\leq a_i$ so $a_i=a_j$ since $a_i$ is an atom. Therefore $i=j$ and this ends proof for $n=1$.

Suppose that this lemma is true for $n$,  $(n+1)a_i$ is well defined and $(n+1)a_i=na_i\oplus a_i\in[a_j,a_j']$. By Lemma~\ref{lem22}
$a_j\leq a_i$ or $a_j\leq na_i$.

 If $0\leq a_j\leq a_i$ then $a_i=a_j$ since $a_i$ is an atom, so $i=j$.

If $a_j\leq na_i$ then $a_j\leq na_i\leq(n+1)a_i\leq a_j'$ so $na_i\in[a_j,a_j']$ and $i=j$ since lemma is true for $n$. Therefore
lemma holds for $n+1$.
\end{proof}

\begin{lemma}\label{lem31}
Let $E$ be a finite homogeneous effect algebra such that $S(E)=\{0,1\}$. Let $\{a_1,\ldots,a_k\}$ be the set of all atoms of $E$.
If $n\geq1$ and $na_i$ is well defined then $na_i=1$ or $na_i\in[a_i,a_i']$.
\end{lemma}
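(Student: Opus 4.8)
The plan is to reduce everything to the two previous lemmas: the structural decomposition of Lemma~\ref{lem20} and the ``uniqueness of the interval'' statement of Lemma~\ref{lem30}. The point is that $na_i$ is some element of $E$, and Lemma~\ref{lem20} already tells us that every element of $E$ is either $0$, or $1$, or sits in one of the intervals $[a_j,a_j']$; Lemma~\ref{lem30} then pins down which interval it can be.

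First I would dispose of the element $0$: since $n\geq 1$ and $a_i$ is an atom, we have $a_i\leq na_i$ (for $n=1$ this is equality, and for $n\geq 2$ it follows from $na_i=(n-1)a_i\oplus a_i$), and $a_i\neq 0$, so $na_i\neq 0$. Next, if $na_i=1$ we are already in the first alternative of the conclusion and there is nothing more to prove. So assume $na_i\neq 1$. Then $na_i\in E\setminus\{0,1\}$, and by Lemma~\ref{lem20} there is an index $j\in\{1,\ldots,k\}$ with $na_i\in[a_j,a_j']$.

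Finally, I would apply Lemma~\ref{lem30} with this $n$ and this $j$: since $na_i$ is well defined ($n\geq 1$) and $na_i\in[a_j,a_j']$, that lemma gives $i=j$, hence $na_i\in[a_i,a_i']$, which is the second alternative of the conclusion.

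I do not expect a genuine obstacle here; the only point requiring a moment's care is verifying $na_i\neq 0$, so that Lemma~\ref{lem20} applies in the usable form ``$na_i=1$ or $na_i$ lies in some $[a_j,a_j']$'' rather than possibly landing on $0$. Everything else is a direct citation of Lemmas~\ref{lem20} and~\ref{lem30}.
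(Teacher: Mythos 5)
Your proposal is correct and matches the paper's argument essentially line for line: rule out $na_i\in\{0,1\}$ (the paper notes $na_i\geq a_i>0$, just as you do), invoke Lemma~\ref{lem20} to place $na_i$ in some $[a_j,a_j']$, and then use Lemma~\ref{lem30} to conclude $j=i$. No differences worth noting.
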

\begin{proof}
If $na_i$ is well defined and $na_i\not=1$ then $na_i\geq a_i>0$ so $na_i\notin\{0,1\}=S(E)$ and by Lemma\ref{lem20}
$na_i\in[a_j,a_j']$ for some $j\in\{1,\ldots,k\}$ by Lemma\ref{lem30} $i=j$ so $na_i\in[a_i,a_i']$.
\end{proof}

\begin{lemma}\label{lem32}
Let $E$ be a finite homogeneous effect algebra such that $S(E)=\{0,1\}$. Let $\{a_1,\ldots,a_k\}$ be the set of all atoms of $E$.
For every $i\in\{1,\ldots,k\}$ there exists $n\in N$ such that $a_i'=na_i$
\end{lemma}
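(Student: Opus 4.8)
The claim is that for each atom $a_i$ of a finite homogeneous effect algebra $E$ with $S(E)=\{0,1\}$, the complement $a_i'$ is a multiple $na_i$ of $a_i$. By Lemma~\ref{lem15} we already know $a_i\le a_i'$, so the whole interval $[a_i,a_i']$ is nonempty and in fact (by Lemma~\ref{lem20}) it sits inside $E$ as one of the pieces of the decomposition. The plan is to "climb" from $a_i$ towards $a_i'$ by repeatedly adding $a_i$, using homogeneity to guarantee that each stage either produces $1$ or stays inside $[a_i,a_i']$, and then to argue that the climb actually terminates exactly at $a_i'$.

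**The main argument.** Fix $i$ and write $a=a_i$ for brevity. Consider the largest $n\ge 1$ for which $na$ is defined; such a maximal $n$ exists because $E$ is finite (the elements $a, 2a, 3a,\dots$ are strictly increasing, since if $(m+1)a=ma$ then $a\le 0$, impossible, so they cannot repeat). By Lemma~\ref{lem31}, each $ma$ with $1\le m\le n$ is either $1$ or lies in $[a,a']$. I would first dispose of the case where some $ma=1$: if $ma=1$ then $a'=(m-1)a$ (since $ma = (m-1)a \oplus a = 1$ forces $(m-1)a$ to be the complement of $a$), and we are done with $n=m-1$ (interpreting $0\cdot a = 0 = 1'$ when $m=1$, though $a\le a'$ rules this out unless... — actually $a\le a'$ forces $m\ge 2$ here, so $(m-1)a$ is a genuine positive multiple). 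So assume no multiple equals $1$; then every $ma\in[a,a']$, in particular $na\le a'$, and I must show $na=a'$.

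**Closing the gap.** Suppose for contradiction $na<a'$. Then $a'\ominus na$ is a nonzero element; pick an atom $a_j$ below it, so $a_j \le a'\ominus na$, equivalently $na\oplus a_j$ is defined and $na \oplus a_j \le a'$. Now here is where I expect the real obstacle: I want to conclude $a_j = a$, so that $na\oplus a_j=(n+1)a$ is defined, contradicting maximality of $n$. To get $a_j=a$, note $na\oplus a_j\le a'$ and $na \ge a$, so $a_j\le (na\oplus a_j)\ominus na$... that is circular. Better: apply homogeneity to $a \le na\oplus a_j$ — wait, we'd want $a\le na \oplus a_j$, which holds since $a\le na$; but that just re-derives $a\le na$. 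The cleanest route is to apply Lemma~\ref{lem22}: since $a\le a'$ (Lemma~\ref{lem15}) and $na\oplus a_j\in[a,a']$, Lemma~\ref{lem22} gives $a\le na$ (true, no info) or $a\le a_j$, and the latter forces $a_j=a$ since $a_j$ is an atom — but the disjunction's first branch is already satisfied, so Lemma~\ref{lem22} as literally stated is not sharp enough.

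**Refined closing step.** So instead I would argue as follows. Apply homogeneity directly to the inequality $a \le na \oplus a_j$: there exist $u,w$ with $u\le na$, $w\le a_j$, $u\oplus w = a$. Since $a_j$ is an atom, $w\in\{0,a_j\}$. If $w=a_j$ then $a_j\le a$, so $a_j=a$, done. If $w=0$ then $u=a$, so $a\le na$ (again no new information) — this branch must be excluded by a separate argument. To exclude it: if $a\le na$ then also look at the element $na\ominus a = (n-1)a$ together with $a_j$; we have $(n-1)a\oplus a\oplus a_j$ defined, i.e. $(n-1)a\oplus(a\oplus a_j)$ — but is $a\oplus a_j$ defined? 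We have $a_j\le a'\ominus na\le a'\ominus a=a''\ominus a$, hmm. Actually $a_j \perp na$ and $a\le na$ together do give $a_j\perp a$ only if... not in general. The genuinely hard part, then, is forcing $a_j=a$; I anticipate needing an induction on $n$ (as in Lemmas~\ref{lem30}--\ref{lem31}) showing that any atom below $a'\ominus na$ must equal $a$, by peeling off copies of $a$ one at a time and invoking homogeneity at each peel, with Lemma~\ref{lem22} handling the inductive step once the trivial branch is absorbed into the inductive hypothesis. Once $a_j=a$ is established, $(n+1)a$ is defined, contradicting maximality, so $na=a'$ and the lemma follows.
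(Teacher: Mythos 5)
There is a genuine gap, and it is exactly at the place you flag as ``the genuinely hard part.'' After assuming that no multiple of $a=a_i$ equals $1$, you take the maximal $n$ with $na$ defined, get $na\le a'$ from Lemma~\ref{lem31}, suppose $na<a'$, pick an atom $a_j\le a'\ominus na$, and then try to force $a_j=a$. Your own analysis shows the attempt does not close: homogeneity applied to $a\le na\oplus a_j$ has the useless branch $w=0$, Lemma~\ref{lem22} is (as you note) not sharp enough because its first alternative $a\le na$ is trivially true, and the promised induction that would absorb this branch is only gestured at, not carried out. Moreover it is not even true in general that the atom $a_j$ must equal $a$ --- nothing in the hypotheses prevents a different atom from sitting below $a'\ominus na$ --- so the plan as stated cannot be repaired by just finishing the sketch.

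The missing idea is much simpler and makes the whole ``closing the gap'' step unnecessary: in an effect algebra, $x\perp y$ means precisely $x\le y'$. So the moment you know $na\le a'$ (which Lemma~\ref{lem31} gives whenever $na\ne1$), the sum $(n+1)a=na\oplus a$ is automatically well defined, and since $na<(n+1)a$ this contradicts the maximality of $n$. Hence for the maximal $n$ the only possibility is $na=1$, and then $(n-1)a\oplus a=1$ gives $a'=(n-1)a$ (with $n\ge2$ because $a\le a'$ forces $a\ne1$). This is the paper's proof, and it coincides with the first case of your case split; the second case (``no multiple equals $1$'') is simply impossible, so the elaborate argument you attempt there --- choosing $a_j$, excluding branches, inducting --- is both incomplete and not needed.
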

\begin{proof}
Let $A=\{x\in E\colon \exists_{n\in N,n\geq1}na_i$ is well defined and $x=na_i\}$. Then $A$ is not empty since $a_i\in A$.
Let $x_0=na_i$ be a maximal element in $A$. By Lemma\ref{lem31} $na_i=1$ or $na_i\in[a_i,a_i']$.

If $na_i\in[a_i,a_i']$ then $na_i\leq a_i'$ so $(n+1)a_i$ is well defined and $x_0=na_i<(n+1)a_i\in A$ (since $n\geq 1$) and we obtain a contradiction with maximality of $x_0$. Hence $na_i=1$ and $(n-1)a_i\oplus a_i=na_i=1$ so $a_i'=(n-1)a_i$.
\end{proof}

\begin{lemma}\label{lem33}
Let $E$ be a finite homogeneous effect algebra such that $S(E)=\{0,1\}$. Let $\{a_1,\ldots,a_k\}$ be the set of all atoms of $E$.
Let $1\leq i\leq k$ and suppose that $na_i$ is well defined. If $[0,na_i]\not=\{0,a_i,2a_i,\ldots,na_i\}$ then there exists $j\in\{1,\ldots,k\}$ such that
$a_j\in[0,na_i]$ and $a_i\not=a_j$.
\end{lemma}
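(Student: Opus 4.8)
The plan is to prove the contrapositive of the implication. So I would suppose that $a_i$ is the \emph{only} atom of $E$ lying below $na_i$ (equivalently, every $j$ with $a_j\le na_i$ satisfies $a_j=a_i$) and deduce that $[0,na_i]=\{0,a_i,2a_i,\ldots,na_i\}$. Since $na_i$ is well defined, associativity of $\oplus$ gives $ma_i\oplus(n-m)a_i=na_i$ for each $0\le m\le n$, so $ma_i\le na_i$ and the inclusion $\{0,a_i,\ldots,na_i\}\subseteq[0,na_i]$ is automatic; the real content is the reverse inclusion.

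To prove $[0,na_i]\subseteq\{0,a_i,\ldots,na_i\}$ I would argue by induction on $x\in[0,na_i]$ with respect to the partial order of $E$, which is well founded because $E$ is finite. If $x=0$ there is nothing to show. If $x\ne 0$, finiteness produces an atom $a\le x$; since $a\le x\le na_i$, the standing assumption forces $a=a_i$, so $a_i\le x$. Write $x=a_i\oplus y$. Then $y\le x$, and $y\ne x$ (for $y=x$ would give $a_i\oplus x=x=x\oplus 0$, hence $a_i=0$ by cancellation), and $y\le x\le na_i$; so $y$ is a strictly smaller element of $[0,na_i]$ and the induction hypothesis yields $y=la_i$ for some $0\le l\le n$. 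Hence $x=a_i\oplus la_i=(l+1)a_i$.

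It remains to see that $l+1\le n$, and this is the only step needing a little care. If instead $l=n$, then $x=(n+1)a_i=na_i\oplus a_i\le na_i$; but from $u\oplus v\le u$ with $u=na_i$, $v=a_i$, associativity gives $u\oplus(v\oplus w)=u$ for a suitable $w$, and cancellation then forces $v\oplus w=0$, i.e.\ $a_i=0$, a contradiction. Therefore $1\le l+1\le n$ and $x\in\{a_i,\ldots,na_i\}$, which closes the induction and establishes the contrapositive. The main (and only mild) obstacle is this final bookkeeping — excluding $x=(n+1)a_i$ from $[0,na_i]$ — together with making sure the induction is genuinely run on a strictly smaller element; everything else is a direct use of finiteness and the cancellation law, and in fact neither homogeneity nor $S(E)=\{0,1\}$ is needed beyond guaranteeing that $a_1,\ldots,a_k$ exhaust the atoms of $E$.
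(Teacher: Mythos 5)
Your proof is correct, and it takes a somewhat different route from the paper's. The paper proves the implication directly, by induction on $n$: in the step from $n$ to $n+1$ it picks a minimal element $x_0$ of the ``bad'' set $[0,(n+1)a_i]\setminus\{0,a_i,\ldots,(n+1)a_i\}$, rules out $a_i\leq x_0$ via the order isomorphism $z\mapsto z\oplus a_i$ between $[0,na_i]$ and $[a_i,(n+1)a_i]$, and then shows $x_0$ is an atom distinct from $a_i$. You instead prove the contrapositive by a well-founded (strong) induction over the elements of the finite poset $[0,na_i]$: if $a_i$ is the only atom below $na_i$, then every nonzero $x\leq na_i$ lies above $a_i$, you peel off one copy of $a_i$ ($x=a_i\oplus y$ with $y<x$), apply the induction hypothesis to $y$, and exclude $l=n$ by positivity/cancellation ($u\oplus v\leq u$ forces $v=0$). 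Your argument avoids both the outer induction on $n$ and the interval-isomorphism step, so it is arguably leaner; the paper's argument has the minor advantage of exhibiting the extra atom $a_j$ concretely as a minimal element of the complement. Your closing observation is also accurate: neither homogeneity nor $S(E)=\{0,1\}$ is used (the same is true, implicitly, of the paper's proof of this particular lemma); all that matters is that $E$ is a finite effect algebra and $\{a_1,\ldots,a_k\}$ exhausts its atoms.
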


\begin{proof}
(by induction with respect to $n$): If $n=1$ then $[0.a_i]=\{0,a_i\}$ since $a_i$ is an atom.

Suppose that this lemma is true for $n$ and $[0,(n+1)a_i]\not=\{0,a_i,\ldots,(n+1)a_i\}$.

 If $[0,na_i]\not=\{0,a_i,2a_i,\ldots,na_i\}$ then (since lemma is true for $n$) there exists $j\in\{1,\ldots,k\}$ such that $a_j\in[0,na_i]$ and $a_i\not=a_j$ so $a_j\leq na_i\leq (n+1)a_i$ and $a_j\in[0,(n+1)a_i]$ hence lemma is true for $n+1$.

 If $[0,na_i]=\{0,a_i,2a_i,\ldots,na_i\}$ then every minimal element of $A=[0,(n+1)a_i]\setminus\{0,a_i,\ldots,(n+1)a_i\}$ is an atom:

Let $x_0$ be a minimal element in $A$, $0\leq y\leq x_0$ and $y\not=0$. We prove that $y=x_0$:

If $a_i\leq x_0$ then $x_0\in[a_i,(n+1)a_i]$ but there is an order isomorphism between $[0,na_i]$ and $[a_i,(n+1)a_i]$ ($z\mapsto z\oplus a_i$) and  $[0,na_i]=\{0,a_i,2a_i,\ldots,na_i\}$ so $x\in[a_i,(n+1)a_i]=\{a_i,\ldots,(n+1)a_i\}$ hence $x_0\notin A$ and we obtain a contradiction.

Therefore $a_i\nleq x_0$. We show that $y\in A$. We have $y\in[0,(n+1)a_i]$ since $y\leq x\leq (n+1)a_i$ and $y\not=ma_i$ for
$m=1,\ldots,n+1$ because if $y=ma_i$ then $a_i\leq ma_i=y\leq x_0$ a contradiction since $a_i\nleq x_0$ hence $y\in A$.
Therefore $y=x_0$ since $x_0$ is a minimal element of $A$. Hence $x_0$ is an atom and there exists $j\in\{1,\ldots,k\}$ such that $x_0=a_j$ and $a_j=x_0\not=a_i$ since $x_0\in A$
\end{proof}

\section{Main Theorem}

\begin{thm}\label{thm36}
Let $E$ be a finite homogeneous effect algebra such that $S(E)=\{0,1\}$. Let $\{a_1,\ldots,a_k\}$ be the set of all atoms of $E$.
Let $1\leq i\leq k$ and suppose that $na_i$ is well defined. If $na_i\leq a_i'$ then $[0,na_i]=\{0,a_i,2a_i,\ldots, na_i\}$.
\end{thm}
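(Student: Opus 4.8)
The plan is to argue by induction on $n$, keeping $a_i$ fixed. The base case $n=1$ is immediate: since $a_i$ is an atom, $[0,a_i]=\{0,a_i\}$ by definition, which is exactly $\{0,1\cdot a_i\}$ (and here the hypothesis $a_i\le a_i'$ is not even needed). So I would assume the statement holds for $n$ and prove it for $n+1$, under the standing assumption that $(n+1)a_i$ is well defined and $(n+1)a_i\le a_i'$.

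Before the inductive step I would record two preliminary remarks. First, the hypothesis forces $a_i\le a_i'$, so that Lemma~\ref{lem22} applies with atom $a=a_i$: indeed if $a_i=1$ then $a_i'=0$ while $(n+1)a_i\ge a_i=1$, which is impossible, so $a_i\notin S(E)=\{0,1\}$ and Lemma~\ref{lem15} gives $a_i\le a_i'$. Second, $na_i\le na_i\oplus a_i=(n+1)a_i\le a_i'$, so the induction hypothesis is genuinely available for the exponent $n$ and the atom $a_i$.

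For the inductive step I would take an arbitrary $x\in[0,(n+1)a_i]$, assume $x\ne 0$, and choose $y\in E$ with $x\oplus y=(n+1)a_i$. Since $a_i\le(n+1)a_i=x\oplus y\le a_i'$, Lemma~\ref{lem22} yields $a_i\le x$ or $a_i\le y$. In the case $a_i\le y$, write $y=a_i\oplus w$; re-associating the orthogonal sum and cancelling $a_i$ in $a_i\oplus(x\oplus w)=(n+1)a_i=a_i\oplus na_i$ gives $x\oplus w=na_i$, hence $x\le na_i$, so by the induction hypothesis $x\in\{0,a_i,\ldots,na_i\}$. In the case $a_i\le x$, write $x=a_i\oplus z$; similarly cancelling $a_i$ in $a_i\oplus(z\oplus y)=(n+1)a_i=a_i\oplus na_i$ gives $z\oplus y=na_i$, hence $z\le na_i$, so the induction hypothesis gives $z=ma_i$ with $0\le m\le n$, and therefore $x=(m+1)a_i$. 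In either case $x\in\{0,a_i,2a_i,\ldots,(n+1)a_i\}$; the reverse inclusion is trivial, completing the induction.

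I expect no serious obstacle on this route; the only points needing care are bookkeeping ones — justifying the two cancellations by the standard cancellation law in an effect algebra after re-associating the orthogonal sums, and making sure that the induction hypothesis really is applicable, which is precisely why I would isolate the monotonicity remark $na_i\le(n+1)a_i\le a_i'$ and the remark $a_i\le a_i'$ in advance. Note that Lemma~\ref{lem33} is not needed for this argument. It does offer an alternative: one could instead suppose $[0,na_i]\ne\{0,a_i,\ldots,na_i\}$, extract from Lemma~\ref{lem33} an atom $a_j\ne a_i$ with $a_j\le na_i\le a_i'$, and aim for a contradiction with Lemma~\ref{lem30}; the snag there is that it is not clear that $na_i\le a_j'$, so one cannot immediately place $na_i$ inside $[a_j,a_j']$, which is why the decomposition argument via Lemma~\ref{lem22} above seems to me the cleaner path.
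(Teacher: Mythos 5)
Your argument is correct, but it takes a genuinely different route from the paper's. The paper proves Theorem \ref{thm36} by contradiction: assuming $[0,na_i]\neq\{0,a_i,\ldots,na_i\}$, it uses Lemma \ref{lem33} to extract a second atom $a_j\neq a_i$ below $na_i$, then picks a maximal pair $(r_0,w_0)$ for which $r_0a_i\oplus w_0a_j$ is defined and derives contradictions in the two cases $r_0a_i\oplus w_0a_j=1$ and $r_0a_i\oplus w_0a_j\neq1$, invoking Lemmas \ref{lem20}, \ref{lem22}, \ref{lem30} and \ref{lem32}; that argument leans on finiteness and on $S(E)=\{0,1\}$ throughout. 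You instead run a direct induction on $n$: write $x\oplus y=(n+1)a_i$, apply Lemma \ref{lem22} with the atom $a_i$ (legitimate, since $a_i\le(n+1)a_i\le a_i'$), and in either case re-associate and cancel $a_i$ to land in $[0,na_i]$, where the induction hypothesis applies precisely because $na_i\le(n+1)a_i\le a_i'$ — the monotonicity remark you isolate is exactly the point that needs to be said, and the cancellations are covered by the standard cancellation law of effect algebras. What your route buys is economy and generality: it uses only homogeneity (through Lemma \ref{lem22}) and the effect-algebra axioms, so neither finiteness nor $S(E)=\{0,1\}$ is actually needed; in fact your detour through Lemma \ref{lem15} to obtain $a_i\le a_i'$ is superfluous, since $a_i\le(n+1)a_i\le a_i'$ gives it directly without the sharpness hypothesis. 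The paper's proof is instead wedded to the machinery of Lemmas \ref{lem30}--\ref{lem33}, and the snag you point out about the Lemma \ref{lem33} alternative (one cannot immediately place $na_i$ inside $[a_j,a_j']$) is exactly the difficulty the paper works around with its maximal-pair argument.
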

\begin{proof}
Assume that $[0,na_i]=\{0,a_i,2a_i,\ldots, na_i\}$. By Lemma \ref{lem33}  there exists $j\in\{1,\ldots,k\}$ such that
$a_j\in[0,na_i]$ and $a_i\not=a_j$. Let $A=\{(r,w)\in N^2\colon ra_i\oplus wa_j$ is well defined $\}$.

We know that $a_j\leq na_i\leq a_i'$ so $a_j\leq a_i'$ and $a_j\perp a_i$ hence $a_i\oplus a_j$ is well defined and $(1,1)\in A$ so
$A$ is not empty and is finite (since $E$ is finite). Then there exists  a maximal element   $(r_0,w_0)$ in $A$ such that $r_0\geq 1$ and $w_0\geq 1$. Hence $r_0a_i\oplus w_0a_j$ is well defined,  $r_0a_i\oplus w_0a_j\nleq a_i'$ and $r_0a_i\oplus w_0a_j\nleq a_j'$. Now we consider two cases
\be
	\item $r_0a_i\oplus w_0a_j=1$.  By Lemma \ref{lem32} there exists $w_1\in N$ such that $w_1a_j=a_j'$ and $(w_1+1)a_j=1$ so
	$wa_j$ is not well defined for $w>w_1+1$ but $w_0a_j$ is well defined hence $w_0\leq w_1+1$. Consider 2 subcases:
	\be
		\item $w+0=w_1+1$ then $w_0a_j=1$ and $r_0a_1\oplus 1$ is well defined so $r_0a_i=0$ and $a_i=0$ a contradiction since 				$a_i$ is an atom.
		\item $w_0\leq w_1$  by Lemma \ref{lem32} there exists $r_1\in N$ such that $r_1a_j=a_j'$ and $(r_1+1)a_i=1$.
			 If $r_0a_i=1$ then $1\oplus w_0a_j$ is well defined (since $r_0a_i\oplus w_0a_j$ is well defined) so $w_0a_j=0$ and 					$a_j=0$ a contradiction since $a_i$ is an atom. Hence $r_0a_i\not=1$ and $r_0\leq r_1$ thus $r_0a_i\leq r_1 a_i=a_i'$.
			We know that $r_0a_i\oplus w_0a_j=1$ so $r_0a_i=(w_0a_j)'=(w_1+1-w_0)a_j\in[a_i,a_i']$ and $w_1+1-w_0\geq 1$
			since $w_0\leq w_1$. By Lemma \ref{lem30} $i=j$ and $a_i=a_j$. We obtain a  a contradiction since $a_i\not=a_j$.
	\ee
	\item $r_0a_i\oplus w_0a_j\not=1$. By Lemma \ref{lem20} there exists $s\in\{1,\ldots,k\}$ such that 								$r_0a_i\oplus w_0a_j\in[a_s,a_s']$. By Lemma \ref{lem22} $a_s\leq r_0a_i$ or $a_s\leq w_0a_j$.
		\be
			\item If $a_s\leq r_0a_i$ then $a_s\leq r_0a_i\leq r_0a_0\oplus w_0a_j\leq a_s'$ and $r_0a_i\in[a_s,a_s']$.
				By Lemma \ref{lem30} $s=i$ hence $r_0a_i \oplus w_0a_j\leq a_i'$ a contradintion since 
				$r_0a_i \oplus w_0a_j\nleq a_i'$.
			\item If $a_s\leq w_0a_j$ then $a_s\leq w_0a_j\leq r_0a_0\oplus w_0a_j\leq a_s'$ and $w_0a_j\in[a_s,a_s']$.
				By Lemma \ref{lem30} $s=j$ hence $r_0a_i \oplus w_0a_j\leq a_j'$ a contradintion since 
				$r_0a_i \oplus w_0a_j\nleq a_j'$.
		\ee
\ee
Hence 
$[0,na_i]=\{0,a_i,2a_i,\ldots, na_i\}$.
\end{proof}
\begin{cor}\label{cor1}
Let $E$ be a finite homogeneous effect algebra such that $S(E)=\{0,1\}$. Let $\{a_1,\ldots,a_k\}$ be the set of all atoms of $E$.
If $i,j\in\{1,\ldots,k\}$, $x\in[a_i,a_i']$, $y\in[a_j,a_j']$ and $i\not=j$ then $x\nleq y$ and $[a_i,a_i']\cap[a_j,a_j']=\varnothing$.
\end{cor}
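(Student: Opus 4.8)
The plan is to prove the first assertion, $x\nleq y$, by contradiction, and then to read off the disjointness statement as an immediate consequence. So I would begin by supposing $x\le y$. Since $a_i\le x$ and $y\le a_j'$, transitivity gives the chain $a_i\le x\le y\le a_j'$, and in particular $a_i\le a_j'$. The goal from here is to turn ``$a_i\le a_j'$'' into the forbidden conclusion $i=j$.

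To do that I would invoke Lemma \ref{lem32} for the atom $a_j$: there is $m\in N$ with $a_j'=ma_j$. Then $ma_j$ is well defined and $ma_j=a_j'\le a_j'$, so Theorem \ref{thm36} (applied with $j$ in place of $i$ and $m$ in place of $n$) yields $[0,ma_j]=\{0,a_j,2a_j,\ldots,ma_j\}$. Since $a_i\le a_j'=ma_j$, the atom $a_i$ lies in this set, so $a_i=la_j$ for some integer $l$ with $0\le l\le m$. Now every value of $l$ leads to a contradiction: $l=0$ gives $a_i=0$, impossible since $a_i$ is an atom; and if $l\ge1$ then $la_j=a_j\oplus(l-1)a_j\ge a_j$, so $a_j\le la_j=a_i$, whence $a_j=a_i$ because $a_i$ is an atom and $a_j\ne0$, i.e.\ $i=j$, contrary to hypothesis. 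This contradiction establishes $x\nleq y$.

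For the second statement, if some $z$ belonged to $[a_i,a_i']\cap[a_j,a_j']$, then applying the part just proved with $x=y=z$ (legitimate since $i\ne j$) would give $z\nleq z$, which is absurd; hence $[a_i,a_i']\cap[a_j,a_j']=\varnothing$. I do not anticipate a real obstacle here: the corollary is essentially a direct consequence of Theorem \ref{thm36} and Lemma \ref{lem32}, and the only points needing care are checking that the hypotheses of Theorem \ref{thm36} are met (immediate from $a_j'=ma_j$) and noting that a nonzero atom cannot be a multiple $la_j$ with $l\ge1$ unless it equals $a_j$. One could equally well run the symmetric argument starting from $a_j\le a_i'$ and writing $a_i'=pa_i$.
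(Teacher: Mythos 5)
Your proposal is correct and takes essentially the same approach as the paper: your argument for $x\nleq y$ (use Lemma \ref{lem32} to write $a_j'=ma_j$, apply Theorem \ref{thm36} to get $a_i=la_j$, and rule out every $l$ to force $i=j$) is exactly the paper's. The only deviation is that you deduce $[a_i,a_i']\cap[a_j,a_j']=\varnothing$ directly from the first part by taking $x=y=z$, a valid and slightly shorter route, whereas the paper proves disjointness separately via a second appeal to Theorem \ref{thm36} together with Lemma \ref{lem30}.
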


\begin{proof}
By Lemma \ref{lem32} there exist $n,m\in N$ such that $a_i'=na_i$ and $a_j'=ma_j$. Assume that $z\in[a_i,a_i']\cap[a_j,a_j']$ then
$z\in[0,na_i]$ so by Theorem \ref{thm36} $z=ra_i$ for some $r\in\{0,\ldots,n\}$ hence $ra_i\in[a_j,a_j']$ and by Lemma \ref{lem30}
we have $i=j$. Therefore $[a_i,a_i']\cap[a_j,a_j']=\varnothing$ for $i\not=j$.

Suppose that $x\in[a_i,a_i']$, $y\in[a_j,a_j']$ and $x\leq y$. Then $a_i\leq x\leq y\leq a_j'=ma_j$ so $a_i\in[0,ma_j]$. By Theorem \ref{thm36} there exists $s\in\{0,\ldots,m\}$ such that $a_i=sa_j$. If $s=0$ then $a_i=0$ a contradiction since $a_i$ is an atom.
If $s\geq1$ then $a_i=sa_j\geq a_j\geq0$ so $a_i=a_j$ since $a_i$ is an atom. Thus $i=j$.
\end{proof}

\begin{cor}\label{cor2}
Let $E$ be a finite homogeneous effect algebra such that $S(E)=\{0,1\}$. Then $E$ is isomorphic to horizontal sum of chains.
\end{cor}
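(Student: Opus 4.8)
The plan is to turn the preceding lemmas into an explicit description of $E$ as a union of finite chains overlapping only in $\{0,1\}$, and then to recognize that union as a horizontal sum.

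First I would fix $i\in\{1,\ldots,k\}$ and, by Lemma~\ref{lem32}, write $a_i'=n_ia_i$. Since $n_ia_i=a_i'\leq a_i'$, Theorem~\ref{thm36} gives $[0,a_i']=\{0,a_i,2a_i,\ldots,n_ia_i\}$. A routine cancellation argument (if $ra_i=sa_i$ with $r<s$ then $(s-r)a_i=0\geq a_i$, impossible) shows these $n_i+1$ elements are pairwise distinct, and $sa_i=ra_i\oplus(s-r)a_i$ shows they are linearly ordered; adding $1$, which lies strictly above $a_i'$ (as $a_i'=1$ would force $a_i=0$), exhibits $C_i:=[0,a_i']\cup\{1\}$ as a chain $0<a_i<2a_i<\cdots<n_ia_i<1$. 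I would then verify that $C_i$ is a sub-effect-algebra of $E$: it contains $1$; it is closed under $'$, since $a_i\leq x\leq a_i'$ forces $a_i\leq x'\leq a_i'$, hence $x'\in[a_i,a_i']\subseteq C_i$; and it is closed under $\oplus$, since if $x,y\in C_i$, $x\oplus y$ is defined and $x\oplus y\neq1$, then $x\oplus y\in[a_s,a_s']$ for some $s$ by Lemma~\ref{lem20}, while $a_i\leq x\leq x\oplus y$ forces $s=i$ by Corollary~\ref{cor1}, so $x\oplus y\in[a_i,a_i']\subseteq C_i$. Thus each $C_i$ is a finite chain effect algebra.

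Next I would assemble the global picture. Since $[a_i,a_i']=[0,a_i']\setminus\{0\}$ and $0,1\notin[a_i,a_i']$, Lemma~\ref{lem20} reads $E\setminus\{0,1\}=\bigcup_{i=1}^k[a_i,a_i']$, and Corollary~\ref{cor1} makes this union disjoint; hence $E=\bigcup_{i=1}^kC_i$ with $C_i\cap C_j=\{0,1\}$ for $i\neq j$. Let $H$ be the horizontal sum of $C_1,\ldots,C_k$ and let $\phi\colon H\to E$ be the identity on each $C_i$; this is well defined because the $C_i$ overlap exactly in $\{0,1\}$, it preserves $0$ and $1$, and it is bijective by the previous sentence. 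If $x\oplus y$ is defined in $H$, then either one of $x,y$ is $0$, or $x,y$ lie in a common $C_i$ and $x\oplus y$ is their sum there; in both cases $x\oplus y$ is defined in $E$ with the same value, as each $C_i$ is a sub-effect-algebra.

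It remains to prove the converse: if $x\oplus y$ is defined in $E$, it is defined in $H$. If $x=0$ or $y=0$ this is clear; if $x=1$ then $y\leq1'=0$, so $y=0$; so assume $x,y\in E\setminus\{0,1\}$, say $x\in[a_i,a_i']$, $y\in[a_j,a_j']$. If $x\oplus y=1$ then $y=x'$, and $a_i\leq x\leq a_i'$ gives $a_i\leq x'\leq a_i'$, so $y\in[a_i,a_i']$ and $x\oplus y=x\oplus x'$ is the sum of $x,y$ in $C_i$. If $x\oplus y\neq1$, then $x\oplus y\in[a_s,a_s']$ for some $s$ by Lemma~\ref{lem20}, and $a_i\leq x\leq x\oplus y$ with Corollary~\ref{cor1} gives $s=i$, and symmetrically $s=j$, so $x,y\in[a_i,a_i']\subseteq C_i$ and the sum is computed inside $C_i$. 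In either case $x\oplus y$ is defined in $H$, so $\phi$ is an isomorphism and $E$ is the horizontal sum of the finite chains $C_1,\ldots,C_k$. The point needing care throughout is exactly the assertion that every defined sum "happens inside a single $C_i$"; this is what Corollary~\ref{cor1} provides, once the case $x\oplus y=1$ is peeled off separately so that $x'$ can be relocated into the same interval as $x$.
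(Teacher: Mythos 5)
Your proposal is correct and follows essentially the same route as the paper: Theorem~\ref{thm36} makes each $[a_i,a_i']$ a chain, Corollary~\ref{cor1} makes these intervals pairwise disjoint and mutually incomparable, and Lemma~\ref{lem20} decomposes $E$ as $\{0,1\}\cup\bigcup_{i=1}^k[a_i,a_i']$. The only difference is that you spell out in full the verification the paper leaves implicit, namely that each block $C_i=\{0\}\cup[a_i,a_i']\cup\{1\}$ is a sub-effect-algebra and that every defined sum in $E$ takes place inside a single block, which is a worthwhile (and correctly executed) addition rather than a different approach.
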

\begin{proof}
 Let $\{a_1,\ldots,a_k\}$ be the set of all atoms of $E$. By Theorem \ref{thm36} $[a_i,a_i']$ is a chain. By Corollary \ref{cor1} $[a_i,a_i']\cap[a_j,a_j']=\varnothing$ for $i\not=j$ and  if $x\in[a_i,a_i']$, $y\in[a_j,a_j']$ and $i\not=j$ then $x\nleq y$ so by Lemma \ref{lem14} $E=\{0,1\}\cup\bigcup_{n=1}^k[a_i,a_i']$ is isomorphic to horizontal sum of chains.
\end{proof}

\begin{cor}\label{cor3}
Let $E$ be a finite homogeneous effect algebra. If  $S(E)=\{0,1\}$ then $E$ is a lattice.
\end{cor}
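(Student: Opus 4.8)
The plan is to read this off from Corollary \ref{cor2}. That result identifies $E$ with a horizontal sum of the chains $[a_1,a_1'],\dots,[a_k,a_k']$, so the whole task reduces to the elementary observation that a horizontal sum of chains is a lattice. Concretely, I would work directly inside $E$, using the decomposition $E=\{0,1\}\cup\bigcup_{i=1}^k[a_i,a_i']$ from Lemma \ref{lem20} together with the disjointness and incomparability statements of Corollary \ref{cor1} and the fact that each $[a_i,a_i']$ is a chain by Theorem \ref{thm36}.

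First I would record the two facts that make the verification go through. Fix $i$ and $x\in[a_i,a_i']$. Since $a_i$ is an atom, $a_i'\neq1$, so any $z$ with $0\neq z\le x$ satisfies $z\notin\{0,1\}=S(E)$; by Lemma \ref{lem20} such a $z$ lies in some $[a_m,a_m']$, and then $z\le x$ forces $m=i$ by Corollary \ref{cor1}. Hence $[0,x]\subseteq\{0\}\cup[a_i,a_i']$. Dually, any $z$ with $x\le z\neq1$ lies in some $[a_m,a_m']$, and $x\le z$ again forces $m=i$ by Corollary \ref{cor1}, so $[x,1]\subseteq\{1\}\cup[a_i,a_i']$.

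Now, given $x,y\in E$ I would check that $x\wedge y$ and $x\vee y$ exist by cases. If one of $x,y$ is $0$ or $1$, the meet and join are immediate. If $x,y\in[a_i,a_i']$ for the same $i$, then by Theorem \ref{thm36} this interval is a finite chain, so $\min\{x,y\}$ and $\max\{x,y\}$ serve as meet and join. Finally, if $x\in[a_i,a_i']$ and $y\in[a_j,a_j']$ with $i\neq j$ and $x,y\notin\{0,1\}$, then by the two facts above the common lower bounds of $x$ and $y$ lie in $(\{0\}\cup[a_i,a_i'])\cap(\{0\}\cup[a_j,a_j'])=\{0\}$, using $[a_i,a_i']\cap[a_j,a_j']=\varnothing$ from Corollary \ref{cor1}; hence $x\wedge y=0$. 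Dually the common upper bounds of $x$ and $y$ lie in $\{1\}$, so $x\vee y=1$. Thus every pair of elements of $E$ has a meet and a join, i.e.\ $E$ is a lattice.

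There is no real obstacle here: once Theorem \ref{thm36} and Corollary \ref{cor1} are in hand, the only thing to watch is that two elements taken from different chains cannot have a common lower bound other than $0$ (nor a common upper bound other than $1$), which is precisely the ``no element of $[a_i,a_i']$ lies below an element of $[a_j,a_j']$'' part of Corollary \ref{cor1}. Alternatively one can simply cite Corollary \ref{cor2} and invoke the standard fact that the horizontal sum of a family of lattices with identified top and bottom is again a lattice.
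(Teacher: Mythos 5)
Your proposal is correct and follows essentially the same route as the paper: the paper simply cites Corollary \ref{cor2} (that $E$ is a horizontal sum of the chains $[a_i,a_i']$) and concludes $E$ is a lattice, while you additionally spell out the routine case-check (same chain: min/max; different chains: meet $0$, join $1$) using the same ingredients (Theorem \ref{thm36}, Corollary \ref{cor1}, Lemma \ref{lem20}). The extra detail is a sound verification of the ``horizontal sum of chains is a lattice'' step that the paper leaves implicit, not a different argument.
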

\begin{proof}
By Corollary \ref{cor2} $E$ is isomorphic to horizontal sum of chains so $E$ is a lattice.
\end{proof}
\newpage


\begin{thebibliography}{99}
\bibitem{DP00} Dvure\v censkij A., Pulmannov\'a New Trends in Quantum Structures, Kluwer Academic Publ./Ister Science, Dordrecht-Boston-London/Bratislava, 2000.
\bibitem{FB94}   Foulis D. J., Bennett M. K.  Effect Algebras and Unsharp quantum Logics, Foundations of Physics. {\bf 24}, No. 10, 1994, 1331-1351.
\bibitem{GG94} Giuntini R., Grueuling H., Toward a formal language for unsharp properties, Found. Phys, {\bf 19}, 1994, 769-780.
\bibitem{KC94} K\^opka F.,  Chovanec F., $D$-posets, Math. Slovaca, {\bf 44}, 1994, 21-34.
\bibitem{RA15} Mesiar R., Stup\v nanov\'a Open problems from the 12th International Conference on Fuzzy Set Theory and Its Applications, 
Fuzzy Sets and Systems. {\bf 261} (2015) 112-123
\end{thebibliography}
\end{document}